\newtheorem{definition}{Definition}
\newtheorem{theorem}{Theorem}
\begin{document}

\title{\bf A Non-Self-Referential Paradox\\
	in Epistemic Game Theory}

\author{{\sc Ahmad Karimi}\\[1em]
  Department of Mathematics\\
  Behbahan Khatam Alanbia University of Technology\\
  P.O.Box 61635--151, Behbahan, Iran\\
  karimi@bkatu.ac.ir
  }

\maketitle

\begin{abstract}
	 In game theory, the notion of a player's beliefs about the game players' beliefs about other players' beliefs arises naturally. In this paper, we present a non-self-referential paradox in epistemic game theory  which shows that completely modeling  players' epistemic beliefs and assumptions is impossible. Furthermore, we introduce an interactive temporal assumption logic to give an appropriate formalization of the new paradox. Formalizing  the new paradox in this logic shows that there is no complete  interactive temporal assumption  model.
	
	\bigskip
	
	
	\noindent {\bf Keywords}: Non-Self-Referential Paradox $\cdot$ Brandenburger-Keisler Paradox $\cdot$ Yablo's Paradox
	  $\cdot$  Temporal Assumption Logic  $\cdot$  Interactive Models.
\end{abstract}

\section{Introduction}\label{intro}

In 2006  Adam Brandenburger and H. Jerome Keisler presented a two-person and self-referential paradox
in epistemic game theory \cite{brandenburger2006impossibility}. They show that the following configuration of beliefs can not be represented: {\it
	Ann believes that Bob assumes that Ann believes that Bob's assumption is wrong}. The Brandenburger-Keisler paradox (BK paradox)  shows impossibility of completely modeling   players' epistemic beliefs and assumptions.
Brandenburger and Keisler present a modal logic interpretation of the paradox \cite{brandenburger2006impossibility}. They introduce two modal operators intended to represent the agents' beliefs
and assumptions. In \cite{pacuit2007understanding}  Pacuit approaches
the BK paradox from a modal logical perspective and presents a
detailed investigation of the paradox in neighborhood models and   hybrid systems. In particular,
he shows that the paradox can be seen as a theorem of an appropriate hybrid logic.

The BK paradox is essentially a self-referential paradox and similarly to any other paradox of
the same kind can be analyzed from a category theoretical or algebraic point of view. In \cite{abramsky2015lawvere}  Abramsky and Svesper analyze the BK paradox  in categorical context. They  present the   paradox as a fixed-point theorem, which can be carried out in any regular category, and show how it can be reduced to a relational form of the one-person
diagonal argument due to Lawvere \cite{lawvere1969diagonal} where he gave a simple form of the (one-person) diagonal argument as a fixed-point lemma in a very general setting.

Ba\c{s}kent  in \cite{bacskent2015some}  approaches the BK paradox from two different
perspectives: non-well-founded set
theory and paraconsistent logic. He shows that the paradox persists in both frameworks for category-theoretical reasons, but with different properties. Ba\c{s}kent makes the connection between self-referentiality and paraconsistency.

On the other hand, Stephen Yablo \cite{yablo1993paradox} introduced a logical paradox in 1993 that is similar to the liar paradox where he used an infinite sequence of statements. Every statement  in the sequence refers to the truth values of the later statements. Therefore, it seems this paradox avoids self--reference. In this paper, we  present a non-self-referential  multi-agent paradox in epistemic game theory which we call   ``Yablo--like Brandenburger-Keisler paradox''.


In an interactive temporal assumption logic we study  interactive reasoning: how agents  reason about the beliefs of other agents over time. In this paper, we introduce an interactive temporal assumption logic to give an appropriate formalization of the  Yablo--like Brandenburger-Keisler paradox.

\section{Brandenburger-Keisler Paradox}\label{sect2}
Brandenburger and Keisler introduced the following two person Russell-style paradox \cite{bacskent2015some,brandenburger2006impossibility,pacuit2007understanding}. Beliefs and assumptions are two main concepts involved in the statement of the paradox.  An assumption is the strongest belief.  Suppose there are two players, Ann and Bob, and consider the following description of beliefs:

\vspace{5mm}

\centerline{Ann believes that Bob assumes that Ann believes}
\centerline{that Bob's assumption is wrong.}

\vspace{5mm}

\noindent A paradox arises when one asks the question
``Does Ann believe that Bob's assumption is wrong?''
Suppose that answer to the above question is ``yes''. Then according to Ann, Bob's assumption is wrong. But, according to Ann, Bob's assumption is Ann
believes that Bob's assumption is wrong. However, since the answer to the above question is ``yes'', Ann believes that this assumption is correct. So Ann
does not believe that Bob's assumption is wrong. Therefore, the answer to
the above question must be ``no''. Thus, it is not the case that Ann believes
that Bob's assumption is wrong. Hence Ann believes Bob's assumption is
correct. That is, it is correct that Ann believes that Bob's assumption is
wrong. So, the answer must have been ``yes''. This is a contradiction!

Brandenburger and Keisler \cite{brandenburger2006impossibility} introduce belief models for two players to present  their impossibility results. In a belief model, each player has a set of states, and each state for one player has beliefs about the states of the other player. We assume the reader is familiar with the belief models and formulation of BK paradox in modal logic, but for the sake of accessibility, we list the main notations, definitions and theorems which will be referred to later on. For more details, we refer the reader to \cite{brandenburger2006impossibility, pacuit2007understanding}.

\begin{definition}
	A {\bf belief model} is a two-sorted structure
	$${\cal{M}}= (U^a, U^b, P^a, P^b, ...)$$
	where $U^a$ and $U^b$ are the nonempty universe sets (for the two sorts), $P^a$ is
	a proper subset of $U^a\times U^b$, $P^b$ is a proper subset of $U^b \times U^a$, and $P^a$, $P^b$
	are serial, that is, the sentences
	$$\forall x\exists  y\;P^a(x, y) \;\;\text{and}\;\; \forall y\exists  x\; P^b(y, x)$$
	hold. To simplify notation,  $x$ is always a variable
	of sort $U^a$ and $y$ is a variable of sort $U^b$. The members of $U^a$ and $U^b$ are called
	{\bf states} for Ann and Bob, respectively. $P^a$ and $P^b$ are called  {\bf possibility relations}. We say $x$ {\bf believes} a set
	$Y \subseteq U^b $ if $\{y :\; P^a(x,y)\}\subseteq Y $, and $x$ {\bf assumes} $Y$ if $\{y : \; P^a(x, y)\} = Y$.
\hfill $\oplus\hspace{-0.77em}\otimes$
\end{definition}
For an arbitrary structure ${\cal{N}}= (U^a, U^b, P^a, P^b, ...)$, by the first order language for
${\cal{N}}$ we mean the two-sorted first order logic with sorts for $U^a$ and $U^b$ and
symbols for the relations in the vocabulary of ${\cal{N}}$. Given a first order formula
$\varphi(u)$ whose only free variable is $u$, the set defined by $\varphi$ in ${\cal{N}}$ is the set
$\{u :\; \varphi(u) \; \text{is true in}\; {\cal{N}}\}$.
In general, by a language for ${\cal{N}}$ we will mean a subset of the set of all formulas of the first order language for ${\cal{N}}$. Given a language ${\cal L}$ for ${\cal N}$, we let
{${\cal L}_a$, ${\cal L}_b$} be the {families of all subsets of $U^a$, $U^b$} respectively which are defined
by formulas in ${\cal L}$. Also, The diagonal formula $D(x)$ is the first order formula
$$\forall y \;(P^a(x,y) \rightarrow \neg P^b(y,x)).$$
This is the formal counterpart to the intuitive statement: ``Ann believes that
	Bob's assumption is wrong.''

\begin{definition}
	Let ${\cal M}$ be a belief model, and let ${\cal L}$ be a language for it. The model
	${\cal M}$ is {\bf complete} for ${\cal L}$ if each nonempty set $Y \in {\cal L}_b$ is assumed by some $x\in U^a$, and each nonempty $X \in {\cal L}_a$ is assumed by some $y\in U^b$.
\hfill $\oplus\hspace{-0.77em}\otimes$
\end{definition}

In their paper \cite{brandenburger2006impossibility}, Brandenburger and Keisler prove that  there is no belief model which is complete for a language ${\cal L}$ which
contains the tautologically true formulas, $D(x)$, $\forall y\;  P^b(y,x)$ and formula built from the formal counterpart of the BK paradox.



They also present modal formulations of the BK paradox  by presenting
a  specific  modal logic, called {\bf interactive assumption logic}, with two modal operators for  ``belief'' and ``assumption''. For semantical interpretations Brandenburger and Keisler use Kripke models \cite{brandenburger2006impossibility}. Pacuit \cite{pacuit2007understanding} presents a
detailed investigation of the BK paradox in neighborhood models and in hybrid systems. He shows that the paradox can be seen as a theorem of an appropriate hybrid logic.

\section{Yablo's Paradox}

To counter a general belief that all the paradoxes stem from a kind
of circularity (or involve some self--reference, or use a diagonal
argument) Stephen Yablo designed a paradox in 1993 that seemingly
avoided self--reference \cite{yablo1993paradox, yablo1985truth}. Since then much debate
has been sparked in the philosophical community as to whether
Yablo's Paradox is really circular--free or involves some
circularity (at least hidden or implicitly); see e.g.
\cite{beall2001yablo, bringsjord2003mental,  bueno2003yablo, bueno2003paradox, ketland2004bueno, ketland2005yablo, priest1997yablo, sorensen1998yablo, yablo2004circularity}. Unlike the liar paradox, which uses a single sentence, this paradox applies an infinite sequence of statements. There is no consistent way to assign truth values to all the statements, although no statement directly refers to itself. Yablo considers the following sequence of sentences $\{S_i\}$:
\begin{align*}
S_1 : \forall k > 1; \;S_k \;\; \text{is untrue},\\
S_2 : \forall k > 2; \;S_k \;\; \text{is untrue},\\
S_3 : \forall k > 3; \;S_k \;\; \text{is untrue},\\
\vdots ~~~~~~~~~~~~~~~~~~~~~~~~~~~~~~~~
\end{align*}
The paradox follows from the following deductions. Suppose $S_1$ is true. Then for any $k > 1$, $S_k$ is not true. Specially, $S_2$ is not true. Also, $S_k$ is not true for any $k > 2$. But this is exactly
what $S_2$ says, hence $S_2$ is true after all. Contradiction! Suppose then that $S_1$ is false. This means that there is a $k > 1$ such that $S_k$ is true. But we can repeat the reasoning, this time with respect to $S_k$ and reach a contradiction again. No
matter whether we assume $S_1$ to be true or false, we reach a contradiction. Hence the paradox.  Yablo's paradox can be viewed as a non-self-referential liar's paradox; it has been used to give
alternative proof for G\"odel's first
incompleteness theorem \cite{cieslinski2013godelizing,leach2014yablifying}. Recently in \cite{karimi2013diagonalizing,karimi2014theoremizing}, formalization of  Yablo's paradox and its different versions in Linear Temporal Logic (LTL) yield genuine  theorems in this logic.

%
%
%

\section{A Non-Self-Referential Paradox \\ in Epistemic Game Theory
}

In this section, we present a non-self-referential paradox in epistemic game theory. Unlike the BK paradox which uses   one single statement on belief and assumption of agents, this paradox consists of two sequences of players (or agents) and a sequence of statements of agent's belief and assumptions.   Indeed, this  paradox  shows that not every configuration of agents' beliefs and assumptions can be represented.   Let us consider two infinite sequence of players $\{A_i\}$ and $\{B_i\}$, with the   following description of beliefs:\\
\vspace{2mm}

\centerline{
	\begin{tabular}{ccc}
		$A_1$ & \hspace{7mm} & $B_1$\\
		$A_2$ & & $B_2$\\
		$A_3$ & & $B_3$\\
		$\vdots$ & & $\vdots$	
	\end{tabular}
}

\vspace{5mm}

\centerline{For all $i$, $A_i$ believes that $B_i$ assumes that}
\centerline{ for all $j>i,\;$ $A_j$ believes that $B_j$'s assumption is wrong. }

\vspace{10mm}

A paradox arises when one asks the question ``Does $A_1$ believe that $B_1$'s assumption is wrong?''\\

Suppose that  the answer to
the above question is  ``no''. Thus, it is not the case that $A_1$ believes
that $B_1$'s assumption is wrong. Hence $A_1$ believes $B_1$'s assumption is
correct. That is, it is correct that for all $j>1,\;$ $A_j$ believes that $B_j$'s assumption is wrong. Specially, $A_2$ believes that $B_2$'s assumption is wrong. On the other hand, since for all $j>2,\;$ $A_j$ believes that $B_j$'s assumption is wrong, one can conclude that $A_2$ believes $B_2$'s assumption is
correct. Therefore, at the same time $A_2$ believes that $B_2$'s assumption is both correct and wrong. This is a contradiction!

If the answer to the above question is ``yes''. Then according to $A_1$, $B_1$'s assumption is wrong. But, according to $A_1$, $B_1$'s assumption is  that ``for all $j>1,\;$ $A_j$ believes that $B_j$'s assumption is wrong''. Thus, there is $k>1$ for which $A_k$ believes that $B_k$'s assumption is correct. Now we can apply the same reasoning we used before about $A_k$ and $B_k$ to reach the contradiction! Hence the paradox. This paradox is a non-self-referential multi-agent version of the BK paradox.


\section{Interactive Temporal Assumption  Logic}

In this section, we introduce an Interactive Temporal Assumption Logic (iTAL)  to present an appropriate formulation of the non-self-referential Yablo-like BK paradox.  The  \textit{interactive temporal assumption language} ${\cal L}_{iTAL}$ contains individual propositional symbols, the propositional
connectives, the linear-time operators $\ocircle,\Box,\lozenge$ and the epistemic operators ``Believe'' and ``Assumption'': for each pair of players $ij$ among Ann and Bob, the operator $B^{ij}$  will be beliefs for player $i$ about $j$, and $A^{ij}$ is the assumption for  $i$ about $j$. In words, $B^{ij}\phi$ means that the agent $i$ believes $\phi$ about $j$, and $A^{ij}\phi$ is that the agent $i$ assumes $\phi$ about agent $j$.
The temporal operators $\circ, \Box,$ and $\lozenge$ are called \textit{next time}, \textit{always (or henceforth)}, and
\textit{sometime (or eventuality)} operators, respectively. Formulas $\ocircle \varphi$, $\Box \varphi$, and $\lozenge \varphi$ are
typically read ``next $\varphi$'', ``always $\varphi$'', and ``sometime $\varphi$''. We note that $\lozenge \varphi \equiv \neg\Box\neg \varphi$.
\begin{definition}
	Formulas in ${\cal L}_{iTAL}$ are defined  as follows:

\qquad $\phi \;\; := p\;|\;\neg \phi \;|\; \phi\wedge\psi \;|\; \ocircle\phi \; |\; \Box \phi\; | \; B^{ij} \phi\;|\; 	A^{ij} \phi$
\hfill $\oplus\hspace{-0.77em}\otimes$
\end{definition}

For semantical interpretations we introduce an appropriate class of Kripke models.

\begin{definition}
An iTAL-Model  is  a Kripke  structure $${\cal W} = (W, \Bbb{N}, \{P_n: n\in \Bbb{N}\},U^a,U^b,V),$$ where $W$ is nonempty set, $\Bbb{N}$ is the set of natural numbers, for each $n\in \Bbb{N}, \; P_n$ is a binary relation $P_n\subseteq W\times W$ and  $U^a,U^b$ are disjoint sets such that $(U^a,U^b,P_n^a,P_n^b)$ is a belief model, where $U^a\cup U^b = W,\; P_n^a = P_n \cap U^a\times U^b$, and $P_n^b= P_n\cap U^b\times U^a$. $V: \text{\bf Prop} \rightarrow 2^{\Bbb{N}\times W}$ is a function mapping to each propositional letter $p$ the subset $V(p)$ of
Cartesian product $\Bbb{N}\times W$. Indeed, $V(p)$ is the set of pairs $(n,w)$ such that $p$
is true in the world $w$ at the moment $n$.
\hfill $\oplus\hspace{-0.77em}\otimes$
\end{definition}
The satisfiability of a formula $\varphi\in {\cal L}_{iTAL}$ in a model ${\cal W}$, at a moment
of time  $n\in \Bbb{N}$ in a world $w\in W$, denoted by ${\cal W}_n^w\Vdash\varphi$ (in short; $(n,w)\Vdash\varphi$ ), is defined
inductively as follows:

\begin{itemize}\itemindent=-1.5em
	\item $(n,w) \Vdash p \Longleftrightarrow (n,w)\in V(p)$ for $p \in $ {\bf Prop},
	\item  $(n,w) \Vdash \neg \varphi \Longleftrightarrow (n,w)\not\Vdash \varphi$,
	\item $(n,w) \Vdash \varphi\wedge\psi \Longleftrightarrow (n,w)\Vdash \varphi$ and $(n,w)\Vdash \psi$,
	\item $(n,w)\Vdash \ocircle\varphi \Longleftrightarrow (n+1,w)\Vdash \varphi$,
	\item $(n,w) \Vdash\Box\varphi \Longleftrightarrow \forall m\geq n\;\; (m,w)\Vdash \varphi$,
\item $(n,w) \Vdash B^{ij}\varphi \Leftrightarrow (n,w)\Vdash {\bf U}^c \wedge \forall z [(P_n(w,z)\wedge (n,z) \Vdash {\bf U}^d) \rightarrow (n,z)\Vdash \varphi])$,
\item $(n,w) \Vdash A^{ij}\varphi \Leftrightarrow (n,w)\Vdash {\bf U}^c \wedge \forall z [(P_n(w,z)\wedge (n,z)\Vdash {\bf U}^d) \leftrightarrow (n,z)\Vdash \varphi])$.
\end{itemize}
Let $x$ has sort $U^a$, $y$ has sort $U^b$ and $\varphi$ is a statement about $y$. Intuitively,  $(n,x) \Vdash B^{ab}\varphi $ says that ``in time $n$, $x$ believes $\varphi(y)$'', and  $(n,x) \Vdash A^{ab}\varphi $ says that ``in time $n$, $x$ assumes $\varphi(y)$''. A formula is {\it valid} for $V$ in ${\cal W}$ if it is true at all $w\in W$, and {\it satisfiable}
for $V$ in ${\cal W}$ if it is true at some $w\in W$. In an interactive assumption  model ${\cal W}$,   we will always suppose
that {\bf D} is a propositional symbol,
and $V$ is a valuation in ${\cal W}$ such that $V({\bf D})$ is the set
$$D = \big \{(n,x) \in W\times \Bbb{N} :\;  (\forall y\in  W)[P_n(x,y)\rightarrow \neg P_n(y,x)]\big\}.$$

In the rest of paper, we present our formulation of the non-self-referential Yablo-like BK paradox in the interactive temporal assumption setting. The thought is that
we can make progress by thinking of the sequences of agents in Yablo-like BK paradox not as infinite families of agents but as a two individual agents that their belief and assumption can be evaluated in
lots of times (temporal states) in an epistemic temporal model. Thus, the  emergence of the Yablo-like BK paradox should be
the same   as the derivability of a particular formula in the epistemic temporal logic.

Let us have a closer look at the Yablo-like BK paradox:
\vspace{5mm}

\centerline{For all $i$, $A_i$ believes that $B_i$ assumes that}
\centerline{ for all $j>i,\;$ $A_j$ believes that $B_j$'s assumption is wrong. }
\vspace{5mm}
Now suppose that there are two players, namely Ann and Bob. Assume that $A_i$ and $B_i$ are the counterparts of Ann and Bob in the $i^{th}$ temporal state. Then infinitely many statements in the Yablo-like BK paradox can be represented in just one single formula using  temporal tools:

$$\Box\big[\text{Ann believes  that Bob assumes that}$$
$$\big(\!\!\ocircle\!\Box (\text{Ann believes  that Bob's assumption is wrong})\big)\big]$$

The interpretation of above formula in English   can be seen as:
``Always it is the case that  Ann believes that Bob assumes from the next time henceforth that Ann believes that  Bob's assumption is wrong''. We note that   Ann and Bob in each state refer to their belief and assumptions in the next temporal states; not the state that they are in.

\begin{theorem}\label{ietltheorem1}
	In an interactive temporal assumption model ${\cal W}$, if $\Box(A^{ab}{\bf U}^b)$ is satisfiable, then
$$\Box\big[B^{ab}A^{ba}(\ocircle\Box {\bf D})\big] \longrightarrow \Box{\bf D} $$
is valid in  ${\cal W}$.	
\end{theorem}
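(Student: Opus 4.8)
The plan is to prove the implication by establishing its validity pointwise: fix an arbitrary pair $(n,w)$, assume that $(n,w)\Vdash\Box\big[B^{ab}A^{ba}(\ocircle\Box{\bf D})\big]$, and derive $(n,w)\Vdash\Box{\bf D}$, i.e.\ $(m,w)\in D$ for every $m\geq n$. First I would read off from the outer belief modality that $w\in U^a$, and use the hypothesis $\Box(A^{ab}{\bf U}^b)$ together with the seriality of each $P_m^a$ to guarantee that, at every time $m\geq n$, the set of Bob-states $y$ with $P_m(w,y)$ is nonempty; this non-vacuity is what will later turn a universally quantified diagonal condition into a genuine biconditional rather than a one-sided implication.

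Next I would unwind the nested operators at a fixed $m\geq n$. From $(m,w)\Vdash B^{ab}A^{ba}(\ocircle\Box{\bf D})$, every Bob-state $y$ with $P_m(w,y)$ satisfies $(m,y)\Vdash A^{ba}(\ocircle\Box{\bf D})$. Applying the assumption clause at $y$ and instantiating its biconditional at the Ann-state $v=w$ (using $w\in U^a$) yields
\[
P_m(y,w)\;\Longleftrightarrow\;(m,w)\Vdash\ocircle\Box{\bf D}.
\]
Writing $Q(m)$ for the statement ``$(m,w)\in D$'' and noting that $(m,w)\Vdash\ocircle\Box{\bf D}$ unfolds to $\forall m'>m\,Q(m')$, the right-hand side is independent of $y$, so all related Bob-states share the same value of $P_m(y,w)$. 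Feeding this into the diagonal $Q(m)=\forall y\,[P_m(w,y)\rightarrow\neg P_m(y,w)]$ and using non-vacuity, I obtain for every $m\geq n$ the recurrence
\[
Q(m)\;\Longleftrightarrow\;\neg\,\forall m'>m\,Q(m')\;\Longleftrightarrow\;\exists m'>m\,\neg Q(m').
\]
This is exactly the Yablo pattern, now realized as a constraint on the actual truth values $Q(m)$ in the model.

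Finally I would run the Yablo argument to force $Q(m)$ for all $m\geq n$. Suppose $\neg Q(m_0)$ for some $m_0\geq n$; the recurrence at $m_0$ gives $\forall m'>m_0\,Q(m')$, but then $Q(m_0+1)$ holds and the recurrence at $m_0+1$ demands some $m_1>m_0+1$ with $\neg Q(m_1)$, contradicting $\forall m'>m_0\,Q(m')$. Hence $Q(m_0)$ holds for every $m_0\geq n$, i.e.\ $(n,w)\Vdash\Box{\bf D}$; since $(n,w)$ was arbitrary the implication is valid.

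The main obstacle is the faithful translation of the stacked modalities $\Box$, $B^{ab}$, $A^{ba}$, $\ocircle\Box$ into the single recurrence, and in particular the careful use of the assumption operator's biconditional (not merely the belief operator's implication) to pin down $P_m(y,w)$ exactly as $\forall m'>m\,Q(m')$. I expect the subtle point to be justifying that the universally quantified diagonal collapses to $\neg\,\forall m'>m\,Q(m')$: this needs both that every related $y$ carries the same truth value of $P_m(y,w)$ and that at least one such $y$ exists, which is where the hypothesis and seriality are used. It is worth remarking that the recurrence is in fact globally unsatisfiable, so the one-directional Yablo deduction above yields $\Box{\bf D}$; the complementary reading of the same recurrence is what ultimately blocks the existence of a complete interactive temporal assumption model.
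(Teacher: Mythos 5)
Your proposal is correct, and it takes a genuinely different --- and in one respect tighter --- route than the paper. The paper never argues at an arbitrary state: it fixes the witness $(k,\tilde x)$ of the satisfiability of $\Box(A^{ab}{\bf U}^b)$ and extracts from the assumption semantics the very strong consequence $\forall l\geq k\;\forall y\,P_l^a(\tilde x,y)$ (total relatedness of $\tilde x$ to \emph{all} Bob-states, not mere non-vacuity); it then keeps one fixed pair $(\tilde x,\tilde y)$ produced by $\neg{\bf D}$ at time $k$, transports it to time $k+1$ via total relatedness, and derives $P_{k+1}^b(\tilde y,\tilde x)$ and $\neg P_{k+1}^b(\tilde y,\tilde x)$ simultaneously from the two directions of the $A^{ba}$ biconditional. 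You instead work at an arbitrary $(n,w)$, isolate the key observation that the biconditional's right-hand side $(m,w)\Vdash\ocircle\Box{\bf D}$ is independent of $y$, and collapse the whole configuration to the scalar recurrence $Q(m)\leftrightarrow\exists m'>m\,\neg Q(m')$, on which you run the Yablo argument with a fresh serial witness at each time rather than one transported pair. This buys two things the paper's proof does not deliver as written: first, the theorem claims \emph{validity}, i.e.\ truth at all states, but the paper only verifies the implication at the witness $(k,\tilde x)$ where total relatedness is available, whereas your pointwise argument covers every state (the antecedent forces $w\in U^a$, and Bob-states satisfy the implication vacuously); second, your argument reveals that the hypothesis $\Box(A^{ab}{\bf U}^b)$ is essentially superfluous, since the seriality of each $P_m^a$ --- already built into the definition of a belief model underlying an iTAL-model --- supplies all the non-vacuity needed, so your invocation of the hypothesis alongside seriality is harmless but redundant. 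One small caveat common to both proofs: the diagonal set $D$ quantifies over all $y\in W$, so collapsing $Q(m)$ to a condition on Bob-states tacitly assumes $P_m$ relates only states of opposite sorts ($P_m=P_m^a\cup P_m^b$); the paper makes the same implicit assumption when it reads $\neg{\bf D}$ as producing a Bob-state witness. Your closing remark that the recurrence is globally unsatisfiable is exactly what the paper's Theorem~2 records.
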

\begin{proof}
	Since $\Box(A^{ab}{\bf U}^b)$ is satisfiable, there exist $k\in\Bbb{N}$ and $\tilde{x}\in W$ for which $(k,\tilde{x})\Vdash\Box(A^{ab}{\bf U}^b)$. This means that $\forall l\geq k\;\; (l,\tilde{x})\Vdash A^{ab}{\bf U}^b$. So, for all $l\geq k$, 	$(l,\tilde{x})\Vdash {\bf U}^a\wedge \forall y\; [P_l^a(\tilde{x},y) \wedge (l,y)\Vdash {\bf U}^b \leftrightarrow (l,y)\Vdash {\bf U}^b]$. Thus
	\begin{align}\label{abc1}
	\forall l\geq k \;\; (l,\tilde{x})\Vdash {\bf U}^a\wedge \forall y\; [P_l^a(\tilde{x},y)].
	\end{align}
	Assume $(k,\tilde{x})\Vdash\Box\big[B^{ab}A^{ba}(\ocircle\Box {\bf D})\big]$;  we show  that $(k,\tilde{x})\Vdash\Box{\bf D}$. To this end, for a moment, suppose that $(k,\tilde{x})\not\Vdash\Box{\bf D}$. So, there is $m\geq k$ for which $(m,\tilde{x})\not\Vdash{\bf D}$. We can   assume that $m=k$. Thus,  $(k,\tilde{x})\Vdash\neg{\bf D}$ which means that $\exists \tilde{y}\;[P_k^a(\tilde{x},\tilde{y})\wedge P_k^b(\tilde{y},\tilde{x})]$. By   (\ref{abc1}) for $l=k$, $(k,\tilde{x})\Vdash {\bf U}^a\wedge \forall y\; [P_k^a(\tilde{x},y)]$. Since $(k,\tilde{x})\Vdash\Box\big[B^{ab}A^{ba}(\ocircle\Box {\bf D})\big]$, then
	\begin{align*}
	 &\forall l\geq k \; (l,\tilde{x})\Vdash B^{ab}A^{ba}(\ocircle\Box {\bf D})\\
	 \Longrightarrow\; & \forall l\geq k \; (l,\tilde{x})\Vdash {\bf U}^a \wedge \forall y [P_l^a(\tilde{x},y) \wedge (l,y)\Vdash {\bf U}^b \rightarrow (l,y)\Vdash A^{ba}(\ocircle\Box {\bf D})]  \\
	 \Longrightarrow\; & \forall l\geq k\; \forall y\;  (l,y)\Vdash A^{ba}(\ocircle\Box {\bf D})\\
	 \Longrightarrow\; & \forall l\geq k\;  \forall y\;   \big((l,y)\Vdash {\bf U}^b \wedge  \forall w\; [P_l^b(y,w) \wedge (l,w)\Vdash {\bf U}^a \leftrightarrow (l,w)\Vdash \ocircle\Box {\bf D}]\big).
	\end{align*}
	Therefore, since $P_k^b(\tilde{y},\tilde{x})$, one can conclude  that $(k,\tilde{x})\Vdash \ocircle\Box {\bf D}$. So, we have  $\forall l\!\geq\!k+1 \; (k,\tilde{x})\Vdash \ocircle\Box {\bf D}$, thus $\forall l\!\geq\!k+1 \; [P_l^a(\tilde{x},\tilde{y}) \rightarrow   \neg P_l^b(\tilde{y},\tilde{x})]$. In particular,  for $k+1$, $P_{k+1}^a(\tilde{x},\tilde{y}) \rightarrow   \neg P_{k+1}^b(\tilde{y},\tilde{x})$. By (\ref{abc1}), since  $P_{k+1}^a(\tilde{x},\tilde{y})$ then $ \neg P_{k+1}^b(\tilde{y},\tilde{x})$. On the other hand, since $(k+1,\tilde{x})\Vdash \ocircle\Box {\bf D}$, then $P_{k+1}^b(\tilde{y},\tilde{x})$ which is a contradiction! Therefore, $(k,\tilde{x})\Vdash\Box{\bf D}$ which shows that
	$$\Box\big[B^{ab}A^{ba}(\ocircle\Box {\bf D})\big] \longrightarrow \Box{\bf D} $$
	is valid in ${\cal W}$.
	\end{proof}

\vspace{5mm}

\begin{theorem}\label{ietltheorem2}
	In any interactive  temporal assumption model ${\cal W}$, the formula
	$$\neg\Box\big[B^{ab}A^{ba}({\bf U}^a\wedge\ocircle\Box {\bf D})\big] $$
	is valid.	
\end{theorem}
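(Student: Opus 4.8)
The plan is to argue by contradiction and reduce the satisfiability of the premise $\Box[B^{ab}A^{ba}({\bf U}^a\wedge\ocircle\Box{\bf D})]$ to the (unsatisfiable) Yablo sequence. Suppose toward a contradiction that the stated formula fails at some $(k,\tilde{x})$, so that $(k,\tilde{x})\Vdash\Box[B^{ab}A^{ba}({\bf U}^a\wedge\ocircle\Box{\bf D})]$ and hence $(l,\tilde{x})\Vdash B^{ab}A^{ba}({\bf U}^a\wedge\ocircle\Box{\bf D})$ for every $l\geq k$; in particular $\tilde{x}\in U^a$. Unfolding $B^{ab}$ shows that for each $l\geq k$ and each $z\in U^b$ with $P_l^a(\tilde{x},z)$ we have $(l,z)\Vdash A^{ba}({\bf U}^a\wedge\ocircle\Box{\bf D})$.

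First I would extract the crucial biconditional. Unfolding $A^{ba}$ at such a $z$ and reading the defining equivalence only at worlds $w$ with $(l,w)\Vdash{\bf U}^a$ (the ${\bf U}^a$ conjunct inside the assumption is exactly what forces both sides of the biconditional to carry the same $U^a$ guard, so the clause becomes vacuous off $U^a$) yields $P_l^b(z,w)\leftrightarrow (l,w)\Vdash\ocircle\Box{\bf D}$ for every $w\in U^a$. Instantiating $w:=\tilde{x}$ gives, for every $z\in U^b$ with $P_l^a(\tilde{x},z)$, the key fact $P_l^b(z,\tilde{x})\leftrightarrow (l,\tilde{x})\Vdash\ocircle\Box{\bf D}$. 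Combining this with seriality of $P_l^a$ (which supplies at least one successor $z$ of $\tilde{x}$) and the definition of the diagonal set ${\bf D}$, I would then prove the equivalence $(l,\tilde{x})\Vdash{\bf D}\ \Longleftrightarrow\ (l,\tilde{x})\not\Vdash\ocircle\Box{\bf D}$ for every $l\geq k$: if $\tilde{x}$ satisfies $\ocircle\Box{\bf D}$ then every $P_l^a$-successor $z$ also satisfies $P_l^b(z,\tilde{x})$, so the serial witness violates ${\bf D}$, whereas if $\tilde{x}$ fails $\ocircle\Box{\bf D}$ then no successor reflects back and ${\bf D}$ holds.

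With the abbreviations $d_l:=[(l,\tilde{x})\Vdash{\bf D}]$ and $b_l:=[(l,\tilde{x})\Vdash\ocircle\Box{\bf D}]$, the equivalence reads $d_l=\neg b_l$, while the semantics of $\ocircle\Box$ gives that $b_l$ holds iff $d_m$ holds for all $m\geq l+1$. Substituting the first identity into the second collapses everything to $b_l\Longleftrightarrow \forall m>l\,\neg b_m$ for all $l\geq k$, which is precisely the Yablo sequence discussed earlier (with $b_l$ playing the role of ``$S_l$ is true''). I would close by running Yablo's argument on this boolean sequence: assuming some $b_l$ true forces every later $b_m$ false, yet the falsity of $b_{l+1}$ demands a true $b_m$ with $m>l+1$, a contradiction; hence all $b_l$ are false, but then the falsity of $b_k$ requires a true later term, again a contradiction. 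This refutes the premise, so $\neg\Box[B^{ab}A^{ba}({\bf U}^a\wedge\ocircle\Box{\bf D})]$ holds at every world and time, i.e.\ is valid.

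The main obstacle is the middle step — deriving the exact equivalence $(l,\tilde{x})\Vdash{\bf D}\Leftrightarrow(l,\tilde{x})\not\Vdash\ocircle\Box{\bf D}$ uniformly in $l$. Unlike Theorem~\ref{ietltheorem1}, here there is no hypothesis forcing $\tilde{x}$ to consider \emph{all} of $U^b$ possible, so I cannot appeal to an analogue of (\ref{abc1}); the argument must instead get by with a single serial successor, and it is precisely the extra ${\bf U}^a$ conjunct in the assumed formula that makes the assumption biconditional bite only on $U^a$-worlds and thereby yields the equivalence without any completeness or satisfiability hypothesis. Once that equivalence is in hand, the reduction to Yablo's paradox and the final contradiction are routine.
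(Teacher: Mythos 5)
Your proof is correct, but it takes a genuinely different route from the paper's. The paper proves this theorem in two steps: it first invokes Theorem~\ref{ietltheorem1} to conclude $(k,\tilde{x})\Vdash\Box{\bf D}$ from the assumed premise, and then uses the extra ${\bf U}^a$ conjunct---seriality supplies a $\tilde{y}$ with $P_k^a(\tilde{x},\tilde{y})$, and the assumption biconditional, since $(k,\tilde{x})\Vdash{\bf U}^a\wedge\ocircle\Box{\bf D}$ follows from $\Box{\bf D}$, forces $P_k^b(\tilde{y},\tilde{x})$---to contradict $(k,\tilde{x})\Vdash{\bf D}$ at time $k$. You instead bypass Theorem~\ref{ietltheorem1} entirely: your key equivalence $(l,\tilde{x})\Vdash{\bf D}\Leftrightarrow(l,\tilde{x})\not\Vdash\ocircle\Box{\bf D}$, extracted from the ${\bf U}^a$-guarded biconditional instantiated at $w:=\tilde{x}$ together with seriality, collapses the premise to the boolean scheme $b_l\leftrightarrow\forall m>l\,\neg b_m$, which you refute by Yablo's two-case argument. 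What your route buys is substantial: the paper's appeal to Theorem~\ref{ietltheorem1} is actually illegitimate as written, since that theorem carries the hypothesis that $\Box(A^{ab}{\bf U}^b)$ be satisfiable (this is what yields its display~(\ref{abc1}), i.e.\ that $\tilde{x}$ considers all of $U^b$ possible), and nothing in the present setting verifies it---a gap you correctly diagnose in your closing paragraph and that your self-contained argument repairs, while also making the advertised ``Yablo-like'' character of the paradox explicit at the semantic level rather than leaving it buried inside Theorem~\ref{ietltheorem1}. One shared caveat, not a defect relative to the paper: in the direction from $(l,\tilde{x})\not\Vdash\ocircle\Box{\bf D}$ to $(l,\tilde{x})\Vdash{\bf D}$ you read the diagonal set $D$ via the sorted relations $P_l^a,P_l^b$ (so that the absence of reflecting $U^b$-successors already gives ${\bf D}$), whereas $D$ is officially defined with the unsorted $P_n$; the paper's own proofs adopt exactly the same sorted reading, so your argument is faithful to the intended semantics.
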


\begin{proof}
	If not, then $(k,\tilde{x})\Vdash\Box\big[B^{ab}A^{ba}(\ocircle\Box {\bf D})\big]$ for some sate $(k,\tilde{x})$. By Theorem~\ref{ietltheorem1}, $(k,\tilde{x})\Vdash\Box {\bf D}$ so $\forall l\geq k,\; (l,\tilde{x})\Vdash {\bf D}$, in particular for $l=k$. Thus, $\forall y\; [P_k^a(\tilde{x},y)\rightarrow \neg P_k^b(y,\tilde{x})]$. On the other hand,  $(k,\tilde{x})\Vdash B^{ab}A^{ba}{\bf U^a}$. By the  definitions of $B^{ab}$ and $A^{ba}$ on ${\bf U^a}$, there is some $\tilde{y}$ such that the relation $[P_k^a(\tilde{x},\tilde{y})\wedge P_k^b(\tilde{y},\tilde{x})]$ holds. So, $(k,\tilde{x})\Vdash\neg{\bf D}$ which is a contradiction! Therefore, $\neg\Box\big[B^{ab}A^{ba}({\bf U}^a\wedge\ocircle\Box {\bf D})\big]$ is valid in ${\cal W}$.
\end{proof}

\section{Conclusions}
In game theory, the notion of a player's beliefs about the game 
player's beliefs about other players' beliefs arises naturally. We presented a non-self-referential paradox in epistemic game theory which we called ``Yablo-like Brandenburger-Keisler paradox''. Arising the paradox  shows that completely modeling the  players' epistemic beliefs and assumptions is impossible. We formalized  Yablo-like Brandenburger-Keisler paradox in the interactive  temporal assumption logic  and showed that there is no complete   model for the set of all modal formulas built from ${\bf U}^a, {\bf U}^b$ and ${\bf D}$.

%


%

\end{document}